\theoremstyle{thmstyleone}%
\newtheorem{theorem}{Theorem}[section]
\theoremstyle{thmstyletwo}%
\newtheorem{remark}{Remark}[section]%
\theoremstyle{thmstylethree}%
\newtheorem{definition}{Definition}[section]%
\newtheorem{lemma}{Lemma}[section]
\newtheorem{corollary}{Corollary}[section]
\begin{document}

\title{ Some extensions of Krasnoselskii's  fixed point result for real functions}


\author*[1]{\fnm{Hassan} \sur{Khandani}}\email{ Hassan.Khandani@iau.ac.ir, Khandani.hassan@yahoo.com}


\affil*[1]{\orgdiv{Department of mathematics, Mahabad Branch}, \orgname{Islamic Azad university}, \orgaddress{ \city{Mahabad}, \country{Iran}}}

\abstract{We extend Krasnoselskii's fixed point result to non-self-real functions. We find a new and simple proof for Hillam's result. In our approach, we don't assume the image of the related mapping to be compact or bounded. In this way, we extend Hillam's result to self-mappings on $\mathbb R$. Finally, we present a new proof for the global convergence of the Newton-Raphson method.}

\keywords{ Krasnoselskii's theorem, Iterative sequence, Lipschitz function, Fixed point, Real function}
\pacs[MSC Classification]{26A18, 49M15}
\maketitle
\section{Introduction}\label{sec1}
We start our discussion by presenting the definition of L-Lipschitz functions, Krasnoselskii's theorem, and some generalizations of this result. In these results, the related function is self-mapping. In this manuscript, we extend Theorem \ref{hillam} to non-self mappings that are more useful in practice.
\begin{definition}
Let $L>0$, $h:[a,b]\to \mathbb R$ ia called an L-Lipschitz function if $\lvert h(x)-h(y)\rvert\le L\lvert x-y\rvert$ for each $x,y\in [a,b].$
\end{definition}
\begin{definition}(Krasnoselskii's sequence \cite{berinde2007iterative})\label{krseq}
Let $X$ be a Banach space and $A$ be a nonempty subset of $X$. For each $x_0\in A$ and $t\in (0,1]$, the Krasnoselskii iteration of $h:A\to A$ is defined as follows:
\begin{equation}\label{krsequence}
x_{n+1}=t x_n+(1-t)h(x_n)\text{ for each } n\ge 0.
\end{equation}
\end{definition}
\begin{theorem}[Krasnoselskii \cite{krssnoselskiitwo}]
Suppose $A$ is a uniformly convex compact subset of a Banach space $X$. For every nonexpansive mapping $h:A\to A$ (i.e., $\lvert h(x)-h(y)\rvert\le \lvert x-y\rvert$ for each $x,y\in A$) the sequence of iterations defined by Equation \ref{krsequence} converges to a fixed point of $h$.
\end{theorem}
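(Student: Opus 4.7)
The plan is to combine Schauder's fixed point theorem for existence, a Fej\'er-monotonicity observation for control of the distance to a fixed point, a uniform-convexity argument for asymptotic regularity of the iteration, and a compactness extraction to upgrade a subsequential limit to full convergence.

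First I would introduce the averaging operator $Tx := tx + (1-t)h(x)$, which maps the compact convex set $A$ into itself (by convexity of $A$ and $h(A)\subset A$) and has exactly the same fixed points as $h$. Applying Schauder's fixed point theorem to the continuous map $h$ on the compact convex set $A$ yields some $p\in A$ with $h(p)=p$. For any such $p$, nonexpansiveness and the triangle inequality give
\[
\|x_{n+1}-p\|\le t\|x_n-p\|+(1-t)\|h(x_n)-h(p)\|\le \|x_n-p\|,
\]
so $(\|x_n-p\|)$ is non-increasing and converges to some $r\ge 0$. If $r=0$ the iteration already tends to $p$; the substantive work is reserved for the case $r>0$.

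The main step is to prove asymptotic regularity $\|h(x_n)-x_n\|\to 0$. Writing $x_{n+1}-p$ as the convex combination $t(x_n-p)+(1-t)(h(x_n)-p)$ of two vectors whose norms both approach $r$, I would argue by contradiction: if $\|h(x_n)-x_n\|\ge \varepsilon$ along a subsequence, then after rescaling by $r$ the uniform convexity of $X$ supplies a $\delta>0$ with
\[
\|x_{n+1}-p\|\le (1-\delta)\max\bigl(\|x_n-p\|,\,\|h(x_n)-p\|\bigr)
\]
for infinitely many $n$, contradicting the monotone convergence $\|x_n-p\|\to r>0$. This uniform-convexity estimate is the delicate part of the argument and the step I expect to be the main obstacle, since one has to pass carefully between the classical unit-ball formulation of uniform convexity and an averaging of two vectors whose norms are only asymptotically (and not exactly) equal.

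Finally, compactness of $A$ extracts a subsequence $x_{n_k}\to q\in A$; continuity of $h$ together with $\|h(x_{n_k})-x_{n_k}\|\to 0$ forces $h(q)=q$, and the Fej\'er-monotonicity observation of the second paragraph, replayed with $q$ in place of $p$, shows that $\|x_n-q\|$ is non-increasing. Since a subsequence tends to $0$, the entire sequence converges to $q$, which is the desired fixed point.
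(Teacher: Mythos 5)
The paper does not prove this statement at all: it is quoted as a known theorem with a citation to Krasnoselskii, so there is no in-paper argument to compare yours against. Judged on its own, your proposal is the classical proof (Schauder for existence, Fej\'er monotonicity of $\|x_n-p\|$, uniform convexity for asymptotic regularity $\|h(x_n)-x_n\|\to 0$, compactness plus Fej\'er monotonicity at the subsequential limit to upgrade to full convergence), and the overall structure is sound. Two small points deserve attention. First, in the uniform-convexity step you should normalize by $R_n:=\|x_n-p\|$ rather than by $r$: since $\|h(x_n)-p\|\le R_n$, the vectors $u_n=(x_n-p)/R_n$ and $v_n=(h(x_n)-p)/R_n$ lie in the unit ball exactly (not just asymptotically), and $\|u_n-v_n\|\ge\varepsilon/\|x_0-p\|$ whenever $\|x_n-h(x_n)\|\ge\varepsilon$, so the standard estimate $\|tu+(1-t)v\|\le 1-2\min(t,1-t)\,\delta_X(\varepsilon/\|x_0-p\|)$ gives $R_{n+1}\le(1-\delta)R_n$ infinitely often and hence $R_n\to 0$, the desired contradiction; this removes the ``only asymptotically equal norms'' difficulty you flagged. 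Second, your argument genuinely requires $t\in(0,1)$ (the coefficient $2\min(t,1-t)$ vanishes at $t=1$), and indeed the theorem is false for $t=1$ since the iteration is then constant at $x_0$; this is a defect of the paper's Definition~\ref{krseq}, which allows $t\in(0,1]$, rather than of your proof, but you should state the restriction explicitly.
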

Moreover, Edelstein generalized this result to Banach spaces with a strictly convex norm \cite{edelstein1966remark}. Ishikawa showed this result holds for an arbitrary Banach space \cite{edelstein1978nonexpansive}. Edelstein and O'brien independently confirmed this result again \cite{kirk2000nonexpansive}.\\
Apart from all these generalizations, Bailey gave proof of Krasnoselskii's result for nonexpansive real-valued functions on a closed interval \cite{subrahmanyam2018elementary}, \cite{bailey1974krasnoselski}. For some other fixed point results about real-valued functions based on Krasnoselskii's method, we also refer the reader to \cite{borwein1991fixed}. Hillam B. P. \cite{hillam1975generalization} extended Bailey's result to Lipschitzian functions, which is the main subject of our discussion.
\begin{theorem}[Hillam \cite{hillam1975generalization}]\label{hillam}
Let $L>0$, and $h:[a,b]\to [a,b]$ be a $L$-Lipschitz mapping. Then, the sequence of iterations defined by Equation \ref{krsequence} converges monotonically to a fixed point of $h$ if $0<t\le \frac{1}{1+L}$.
\end{theorem}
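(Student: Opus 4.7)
The plan is to recast the iteration as $x_{n+1}=T(x_n)$ for the auxiliary map $T:[a,b]\to[a,b]$ defined by $T(x):=tx+(1-t)h(x)$. Because $T(x)$ is a convex combination of $x$ and $h(x)$ and both already lie in $[a,b]$, the map $T$ sends $[a,b]$ into itself; moreover, since $t<1$, the equation $T(x)=x$ is equivalent to $h(x)=x$, so the fixed points of $h$ and of $T$ coincide. Reducing to a Picard iteration in this way makes the monotonicity argument transparent.

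The heart of the proof is to show that the hypothesis on $t$ forces $T$ to be monotone non-decreasing. For $x\ge y$, the Lipschitz bound gives $h(x)-h(y)\ge -L(x-y)$, so
\[ T(x)-T(y)\ge \bigl[\,t-(1-t)L\,\bigr](x-y), \]
and the prescribed range of $t$ is precisely what makes the bracketed factor have the correct sign. This is the only place the quantitative condition on $t$ enters, and it is the delicate step: enough weight must be placed on the identity piece of the convex combination to absorb the worst downward slope permitted by the Lipschitz constant.

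Once $T$ is monotone on $[a,b]$, monotonicity of the iterates follows automatically. Compare $x_0$ with $x_1=T(x_0)$: if $x_1\ge x_0$, then by induction $x_{n+1}=T(x_n)\ge T(x_{n-1})=x_n$ for every $n$, so $\{x_n\}$ is non-decreasing; the reverse inequality yields the dual, non-increasing case. In either case $\{x_n\}\subset[a,b]$ is bounded and monotone, hence converges to some $x^*\in[a,b]$.

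Finally, $h$ is continuous (being Lipschitz), and so is $T$. Passing to the limit in $x_{n+1}=T(x_n)$ gives $x^*=T(x^*)=tx^*+(1-t)h(x^*)$, which simplifies to $h(x^*)=x^*$, completing the proof. The only genuine obstacle I anticipate is the sign bookkeeping in the monotonicity estimate of the second step; the remaining ingredients (convex invariance of $[a,b]$, bounded-monotone convergence, and continuity) are entirely standard.
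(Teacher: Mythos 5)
Your overall strategy --- pass to the averaged map $T$, show the hypothesis on $t$ makes $T$ monotone non-decreasing on $[a,b]$, deduce that the orbit is monotone and bounded, and identify the limit as a fixed point by continuity --- is sound, and it is a genuinely different (and more self-contained) route than the paper's, which first locates the nearest fixed point $c$ beyond $x_0$ via the intermediate value theorem and then invokes Lemma \ref{existextension1} to show the iterates increase toward $c$ without overshooting it. In fact the paper's key inequality $(1-t)x+th(x)\le c$ for $x<c$ is exactly the special case $T(x)\le T(c)$ of the monotonicity you prove, so your argument subsumes that lemma and avoids the detour through uniqueness of the fixed point on a subinterval.

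However, the step you yourself flag as delicate is where the proof breaks as written. With your convention $T(x)=tx+(1-t)h(x)$ the bracket is $t-(1-t)L$, which is non-negative precisely when $t\ge \frac{L}{1+L}$, i.e.\ when the weight $1-t$ placed on $h$ is at most $\frac{1}{1+L}$. The prescribed range $0<t\le\frac{1}{1+L}$ does \emph{not} achieve this: for $L>1$ one has $\frac{1}{1+L}<\frac{L}{1+L}$, so the bracket is negative throughout the entire admissible range (for instance $L=2$, $t=\frac{1}{3}$ gives $t-(1-t)L=-1$, and $h(x)=\min(1,\max(0,1-2x))$ on $[0,1]$ then produces an orbit that oscillates away from the fixed point $\frac{1}{3}$). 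The root cause is an inconsistency in the paper itself: Equation \ref{krsequence} puts weight $1-t$ on $h$, while the hypothesis $t\le\frac{1}{1+L}$ and the paper's own proof (via Equation \ref{meanex}) put weight $t$ on $h$. Rewrite your auxiliary map as $T(x)=(1-t)x+th(x)$; then $T(x)-T(y)\ge\bigl[(1-t)-tL\bigr](x-y)$ and $(1-t)-tL\ge 0$ is exactly equivalent to $t\le\frac{1}{1+L}$. With that single change of convention (and noting $t>0$, so $T(x)=x$ still forces $h(x)=x$), the rest of your argument goes through verbatim.
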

As we mentioned before, in the proof of all Krasnoselskii fixed point generalizations, especially in Theorem \ref{hillam}, the fact that $h$ is a self-mapping on a closed interval plays a crucial role.\\
The structure of this manuscript is as follows: First, In Theorems \ref{th1} and \ref{th2} we replace the differentiability condition with Lipschitz condition and extend these results to some Krasnoselskii type results for non-self real-valued mappings, Lemmas \ref{existextension1} and \ref{existextension2} of Section \ref{sec2}. Then, we extend Hillam's theorem to self-mappings on $\mathbb R$, Theorem \ref{krass}. After that, we present this result for non-self mappings. Finally, we give a simple proof for Hillam's Theorem. In Section \ref{sec3}, we study conditions under which the Newton-Raphson sequence always converges for any starting point, which is called the global convergence of the Newton-Raphson method. The global convergence of the Newton-Raphson method has already been studied by J. L. Moriss \cite{morris1983computational}. In Section \ref{sec3}, based on Theorems \ref{th1} and \ref{th2} we present a more brief and simple proof for the global convergence of this method. L. Thorlund-Petersen characterized all the functions for which the Newton-Raphson method converges globally \cite{thorlund2004global}.\\
We need the following results in the sequel.\\
Khandani et al. \cite{khandani2021iterative} proved some results, similar to Theorem \ref{hillam}, for real differentiable functions and the Krasnoselskii's sequence $x_{n+1}=\frac{x_n+h(x_n)}{2}$ as follows:
\begin{theorem}[khandani, H. and Khojasteh. F \cite{khandani2021iterative}]\label{th1}
Let $h$ be a continuous real-valued function on $[a,c]$ that is differentiable on $(a,c)$ with $h^{'}(x)\ge -1$ on $(a,c)$, $h(x)>x$ for each $x\in[a,c)$, and $c$ is the unique fixed point of $h$ in $[a,c]$. Let $x_0\in [a,c)$ and for each $n\ge 0$ define:
\begin{equation}
x_{n+1}=\frac{x_n+h(x_n)}{2},
\end{equation}
then the sequence $\{x_n\}$ converges to $c$.
\end{theorem}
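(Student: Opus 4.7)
The plan is to recast the iteration as ordinary Picard iteration by setting $g(x) = (x + h(x))/2$, so that $x_{n+1} = g(x_n)$; the theorem then reduces to showing that $g^{n}(x_{0}) \to c$ whenever $x_0 \in [a,c)$. The whole argument hinges on two elementary properties of $g$ that I would establish first: $g$ is nondecreasing on $[a,c]$, and $g(c) = c$.

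To see $g$ is nondecreasing I would differentiate: $g'(x) = (1 + h'(x))/2 \ge 0$ on $(a,c)$ by the hypothesis $h' \ge -1$, and then invoke the mean value theorem together with continuity on $[a,c]$. The identity $g(c) = c$ is immediate from $h(c) = c$. Combining these two facts yields $g(x) \le c$ for every $x \in [a,c]$, and $g(x) > x$ on $[a,c)$ follows directly from $h(x) > x$.

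From here the argument is essentially automatic. By induction the iterates remain in $[a,c]$, and as long as $x_n < c$ one has $x_n < x_{n+1} \le c$, so the sequence is strictly increasing and bounded above; the monotone convergence theorem delivers a limit $L \in [a,c]$. Passing to the limit in $x_{n+1} = g(x_n)$ using continuity of $g$ yields $g(L) = L$, hence $h(L) = L$, and the uniqueness of $c$ as a fixed point in $[a,c]$ forces $L = c$.

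The only delicate point I anticipate is the upper bound $x_n \le c$: a priori nothing in the hypotheses prevents $h$ from overshooting $c$, since $h(x) > x$ bounds $h$ only from below. The argument that $g$ maps $[a,c]$ into itself therefore has to use monotonicity of $g$ in an essential way rather than any pointwise control of $h$ near $c$, and this is precisely where the derivative condition $h' \ge -1$ enters. Everything else, I expect, is routine.
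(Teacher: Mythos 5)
Your proof is correct and complete: the monotonicity of $g(x)=\frac{x+h(x)}{2}$ via $g'=\frac{1+h'}{2}\ge 0$ plus $g(c)=c$ gives the crucial upper bound $x_n\le c$, and the rest follows by monotone convergence and uniqueness of the fixed point. The paper defers the proof of Theorem \ref{th1} to the cited reference, but its in-paper proof of the directly analogous Lemma \ref{existextension1} follows essentially your outline, the only cosmetic difference being that the bound $x_{n+1}\le c$ is there derived from the pointwise slope inequality $\frac{h(x)-c}{x-c}\ge -L$ (i.e., your monotonicity of $g$ specialized to the pair $(x,c)$) rather than from global monotonicity of $g$.
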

\begin{theorem}[khandani, H. and Khojasteh. F \cite{khandani2021iterative}]\label{th2}
Let $h$ be a continuous real-valued function on $[c,b]$ that is differentiable on $(c,b)$ with $h^{'}(x)\ge -1$ on $(c,b)$, $h(x)<x$ for each $x\in(c,b]$, and $c$ is the unique fixed point of $h$ in $[c,b]$. Let $x_0\in (c,b]$ and for each $n\ge 0$ define:
\begin{equation}\label{meanex2}
x_{n+1}=\frac{x_n+h(x_n)}{2},
\end{equation}
then the sequence $\{x_n\}$ converges to $c$.
\end{theorem}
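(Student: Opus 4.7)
The plan is to recast the iteration as $x_{n+1}=g(x_n)$ where $g(x)=\frac{x+h(x)}{2}$, and show that on $[c,b]$ the map $g$ is nondecreasing, satisfies $g(c)=c$, and obeys $c\le g(x)<x$ for every $x\in(c,b]$. The orbit will then be trapped in $[c,b]$, monotonically decreasing, and convergent; continuity of $g$ plus uniqueness of the fixed point $c$ will force the limit to equal $c$.

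First I would differentiate: $g'(x)=\frac{1+h'(x)}{2}\ge 0$ on $(c,b)$ by the standing hypothesis $h'(x)\ge -1$. Continuity of $g$ on $[c,b]$ upgrades this to $g$ being nondecreasing on the closed interval. Since $c$ is a fixed point of $h$, $g(c)=c$, so monotonicity yields $g(x)\ge c$ for all $x\in[c,b]$. For the upper bound, the assumption $h(x)<x$ on $(c,b]$ gives $g(x)=\frac{x+h(x)}{2}<x$ there. These two inequalities together show $g$ maps $(c,b]$ into $[c,b)$.

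Next, starting from $x_0\in(c,b]$, an easy induction shows $x_n\in[c,x_0]\subset[c,b]$ for all $n$, and that $\{x_n\}$ is strictly decreasing as long as $x_n>c$ (the case $x_n=c$ for some finite $n$ being trivial). A bounded monotone sequence converges, so $x_n\to L$ for some $L\in[c,b]$. Passing to the limit in $x_{n+1}=g(x_n)$ using continuity of $g$ gives $L=g(L)$, equivalently $h(L)=L$. Since $c$ is the unique fixed point of $h$ in $[c,b]$, we conclude $L=c$.

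The main obstacle, if any, is the small point that $g$ is only assumed differentiable on the open interval $(c,b)$, so the monotonicity bound $g(x)\ge c$ on the closed interval requires invoking continuity on $[c,b]$ to pass from the open-interval conclusion; beyond this, the argument is routine and essentially mirrors the one-sided companion Theorem~\ref{th1}.
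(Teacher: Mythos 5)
Your proof is correct and complete; note that the paper does not actually reprove Theorem \ref{th2} (it is quoted from \cite{khandani2021iterative}), but your argument follows exactly the blueprint the paper uses for the analogous Lemma \ref{existextension1}: show the iterates never cross the fixed point $c$ (your monotonicity of $g$, i.e.\ $g'=\frac{1+h'}{2}\ge 0$ giving $g(x)\ge g(c)=c$, is the differentiable-case counterpart of the paper's slope inequality yielding $(1-t)x+th(x)\le c$), deduce monotone bounded convergence, and invoke continuity plus uniqueness of the fixed point. No gaps.
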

In this manuscript, we denote the set of real numbers by $\mathbb R$. We denote the set $\{0,1,2,\dots \}$ of nonnegative integers by $\mathbb N$. For each $a,b\in \mathbb R$ with $a<b$, $[a,b]=\{x\in \mathbb R:a\le x\le b\}$ and $(a,b)=\{x\in \mathbb R:a< x< b\}$ are called closed and open interval from $a$ to $b$ respectively.
\section{Main results}\label{sec2}
First, in Theorems \ref{th1}, \ref{th2}, we replace the differentiability condition with the Lipschitz condition. Then, we replace related recursive sequences with the Krasnoselskii sequence. We produce two fixed-point results. Then, we provide a simple and new proof for Hillam's theorem. Therefore, the following two Lemmas that extend Krasnoselskii's result to non-self mappings are among the main results of this manuscript.
\begin{lemma}\label{existextension1}
Let $L>0$, $h:[a,c]\to \mathbb R$ be a $L$-Lipschitz mapping, $h(x)>x$ for each $x\in[a,c)$, and $c$ be the unique fixed point of $h$ in $[a,c]$. Let $x_0\in [a,c]$ and for each $n\ge 0$ define:
\begin{equation}\label{meanex}
x_{n+1}=(1-t) x_n+t h(x_n),
\end{equation}
then for each $0<t\le \frac{1}{1+L}$ the sequence $\{x_n\}$ converges to $c$.
\end{lemma}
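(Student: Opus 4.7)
The plan is to reduce the iteration to iteration of the auxiliary map $g(x)=(1-t)x+th(x)$ and show that under the hypothesis $0<t\le 1/(1+L)$ the map $g$ sends $[a,c]$ into itself, is increasing along orbits, and has $c$ as its unique fixed point on $[a,c]$. The sequence $\{x_n\}$ is then bounded monotone, so it converges, and continuity forces the limit to be $c$.

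First I would observe the two trivial facts: since $h(c)=c$ we get $g(c)=c$, and $x^*\in[a,c]$ is a fixed point of $g$ if and only if $h(x^*)=x^*$. Second, for $x\in[a,c)$ the hypothesis $h(x)>x$ gives
\[
g(x)=(1-t)x+th(x)>(1-t)x+tx=x,
\]
so the iterates satisfy $x_{n+1}>x_n$ whenever $x_n<c$, and $x_{n+1}=x_n$ if $x_n=c$. In particular $x_n\ge a$ for every $n$.

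The heart of the argument, and the step where the constant $1/(1+L)$ enters, is showing $g(x)\le c$ for all $x\in[a,c]$. Write the required inequality as
\[
t\bigl(h(x)-c\bigr)\le (1-t)(c-x).
\]
Because $c$ is a fixed point of $h$ the Lipschitz bound gives $h(x)-c=h(x)-h(c)\le L\,|x-c|=L(c-x)$, so it suffices to have $tL(c-x)\le (1-t)(c-x)$, which is exactly $t(1+L)\le 1$. This is the main obstacle in the sense that this is the one place where the Lipschitz hypothesis is actually used, and it pins down the range of admissible $t$; I expect this computation to be short but it is the only nontrivial estimate.

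Combining the two estimates gives $g([a,c])\subseteq[a,c]$, so $\{x_n\}$ is a well-defined monotone increasing sequence in $[a,c]$, and hence converges to some $x^\star\in[a,c]$. Since $h$ is $L$-Lipschitz it is continuous, hence so is $g$, and passing to the limit in $x_{n+1}=g(x_n)$ yields $g(x^\star)=x^\star$, i.e.\ $h(x^\star)=x^\star$. The uniqueness of the fixed point of $h$ on $[a,c]$ (equivalently, the strict inequality $h(x)>x$ for $x<c$) forces $x^\star=c$, which completes the proof.
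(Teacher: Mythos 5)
Your proposal is correct and follows essentially the same route as the paper: the one nontrivial step in both is the inequality $(1-t)x+th(x)\le c$ derived from the Lipschitz bound at the fixed point $c$ together with $t\le\frac{1}{1+L}$, after which monotonicity from $h(x)>x$, boundedness, continuity, and uniqueness of the fixed point finish the argument. Your algebra (avoiding division by the negative quantity $x-c$) is slightly cleaner than the paper's, but the proof is the same.
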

\begin{proof} If $x_m=c$ for some non-negative integer $m$, then we have $x_n=c$ for each $n\ge m$ and the proof is complete. Therefore, we suppose $x_n\not = c$ for each $n\ge 0$. Let $0<t\le \frac{1}{1+L}$ and $x\in[a,c)$. Since $h$ is $L$-Lipschitz function we have $\frac{h(x)-c}{x-c}\ge -L\ge 1-\frac{1}{t}$. This follows that $\frac{h(x)-x}{x-c}\ge \frac{-1}{t}$. Rearranging this inequality, we get:
\begin{equation}\label{crse}
(1-t)(x)+t h(x)\le c
\end{equation}
Assume $\{x_n\}$ be defined by Equation \ref{meanex}. From equation \ref{crse} and argument by induction we see that $x_{n}< c$ for each $n\ge 0$. We show that $\{x_n\}$ is an increasing sequence. Since $h(x_0)>x_0$, we have $c>x_1=(1-t) x_0+t h(x_0)>x_0$. Suppose $x_0<x_1<\dots<x_m<c$. Since $a\le x_m<c$, by our assumption $h(x_m)>x_m$ and $x_{m+1}=(1-t) x_m+t h(x_m)>x_m$. Therefore, by induction, $\{x_n\}$ is an increasing sequence. Now, $x_n<c$ for each $n\ge 0$ and $\{x_n\}$ is an increasing sequence in $[a,c]$. Assume $x_n$ converges to $b\in[a,c]$. $b$ is also a fixed point of $h$. Since f has a unique fixed point in $[a,c]$, we deduce that $b=c$. Now, the proof is complete.
\end{proof}
\begin{lemma}\label{existextension2}
Let $L>0$, $h:[c,a]\to \mathbb R$ be a $L$-Lipschitz mapping, $h(x)<x$ for each $x\in(c,a]$, and $c$ be the unique fixed point of $h$ in $[c,a]$. Let $x_0\in [c,a]$ and for each $n\ge 0$ define:
\begin{equation}\label{meanex}
x_{n+1}=(1-t) x_n+t h(x_n),
\end{equation}
then for each $0<t\le \frac{1}{1+L}$ the sequence $\{x_n\}$ converges to $c$.
\end{lemma}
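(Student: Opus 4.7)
The plan is to mirror the argument of Lemma \ref{existextension1} with the inequalities reversed, since the hypotheses on $h$ are now those of a mapping pushing points downward toward the fixed point $c$. First I would dispose of the trivial case in which $x_m=c$ for some $m$, so that thereafter we may assume $x_n\ne c$ and in particular $x_n>c$ at every step we consider.

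The crux is the one-sided estimate at the fixed point. For $x\in(c,a]$, the $L$-Lipschitz condition together with $h(c)=c$ gives
\begin{equation*}
\frac{h(x)-c}{x-c}\ge -L.
\end{equation*}
Since $0<t\le \frac{1}{1+L}$ implies $-L\ge 1-\frac{1}{t}$, this rearranges to $(1-t)x+t h(x)\ge c$. This is exactly the analog of inequality \eqref{crse}, but yielding a lower bound instead of an upper one.

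Starting from $x_0\in[c,a]$, an induction based on this inequality shows $x_n\ge c$ for all $n$; meanwhile the hypothesis $h(x_n)<x_n$ whenever $x_n>c$ gives
\begin{equation*}
x_{n+1}=(1-t)x_n+t h(x_n)<x_n,
\end{equation*}
so $\{x_n\}$ is strictly decreasing. Being a monotone sequence bounded below by $c$ in $[c,a]$, it converges to some limit $b\in[c,a]$. By continuity of $h$ (which follows from the Lipschitz condition), $b$ is a fixed point of $h$; uniqueness of the fixed point in $[c,a]$ forces $b=c$.

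The argument is essentially routine once the key inequality is in place. The only subtle point, and the step most likely to trip up a careless write-up, is the direction of the inequality after dividing by $x-c$: here $x-c>0$ (as opposed to the negative $x-c$ in Lemma \ref{existextension1}), so the chain $-L\ge 1-\frac{1}{t}$ must be handled with care to ensure the final inequality points the right way so that $(1-t)x+t h(x)\ge c$ rather than $\le c$. No compactness or boundedness of $h([c,a])$ is needed, in keeping with the paper's stated improvements over Hillam's setting.
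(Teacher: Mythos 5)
Your proposal is correct and is precisely the argument the paper intends: the paper omits this proof, stating only that it is ``similar to that of Lemma \ref{existextension1},'' and you have supplied exactly that mirrored argument, with the key inequality $(1-t)x+th(x)\ge c$ correctly derived despite the sign reversal from $x-c>0$. Nothing further is needed.
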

\begin{proof}
The proof is similar to that of Lemma \ref{existextension1} and therefore, we omit it.
\end{proof}
Now, in Hillam's fixed Theorem \cite{hillam1975generalization}, we replace the interval $[a,b]$ with $\mathbb R$ and extend this result as follows.
\begin{theorem}\label{krass}
Let $L>0$, $h: \mathbb R\to \mathbb R$ be a $L$-Lipschitz mapping. Let $e\in\mathbb R$, define $x_0=e$ and
\begin{equation}\label{mea}
x_{n+1}=(1-t) x_n+t h(x_n), n\ge 1
\end{equation}
Then, $\{x_n\}$ is a monotone sequence that either converges to a fixed point of $h$ or diverges to infinity.
\end{theorem}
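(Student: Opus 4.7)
The plan is to reduce the whole question to the behavior of the single auxiliary map
\[
g(x) := (1-t)x + t\,h(x),
\]
so that the iteration becomes $x_{n+1} = g(x_n)$. The key algebraic observation is that under the hypothesis $0 < t \le \frac{1}{1+L}$ (implicit from the Hillam setting), $g$ is non-decreasing on $\mathbb R$. Indeed, for any $x > y$, the Lipschitz bound gives $h(x) - h(y) \ge -L(x-y)$, so
\[
g(x) - g(y) \;=\; (1-t)(x-y) + t\bigl(h(x)-h(y)\bigr) \;\ge\; \bigl(1 - t(1+L)\bigr)(x-y) \;\ge\; 0.
\]
So I would establish this monotonicity first as a standalone lemma-style calculation.

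Next I would run a trichotomy on the sign of $h(x_0) - x_0$. If $h(x_0) = x_0$, then $x_0$ is a fixed point of $h$ and the sequence is constant. If $h(x_0) > x_0$, then $x_1 = g(x_0) > x_0$; applying the monotonicity of $g$ inductively yields $x_{n+1} = g(x_n) \ge g(x_{n-1}) = x_n$, so $\{x_n\}$ is non-decreasing. The case $h(x_0) < x_0$ is symmetric and yields a non-increasing sequence. In every case $\{x_n\}$ is monotone.

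Finally, the monotone convergence theorem for real sequences forces $\{x_n\}$ to either converge to some $b \in \mathbb R$ or to diverge to $+\infty$ or $-\infty$. In the convergent case, continuity of $g$ (inherited from the Lipschitz continuity of $h$) gives $b = \lim x_{n+1} = \lim g(x_n) = g(b)$, and $g(b) = b$ rearranges to $t\bigl(h(b) - b\bigr) = 0$, hence $h(b) = b$. This is exactly the dichotomy claimed.

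I do not anticipate a genuine obstacle here: the proof is conceptually straightforward once one notices that the averaging step has been calibrated so that $g$ is monotone. The only mild subtlety is handling the degenerate case in which the sequence becomes eventually constant (i.e.\ hits a fixed point of $h$ in finitely many steps), but this fits harmlessly under the non-strict monotonicity used above. Compared to Hillam's original argument, the proof is shorter because there is no interval $[a,b]$ to keep the iterates inside; the only tool needed is monotonicity of $g$ and the monotone convergence theorem.
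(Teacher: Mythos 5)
Your proof is correct, but it takes a genuinely different route from the paper's. The paper argues by cases on the fixed-point set: if $h(x_0)>x_0$ and some fixed point lies to the right of $x_0$, it takes the \emph{smallest} such fixed point $c$ and invokes Lemma \ref{existextension1} on $[x_0,c]$ to get convergence to $c$; if no fixed point lies ahead, an intermediate-value argument shows $h(x)>x$ for all $x>x_0$, whence the iterates increase without bound. Your argument replaces all of this with a single observation: for $0<t\le\frac{1}{1+L}$ the averaged map $g(x)=(1-t)x+th(x)$ is non-decreasing on all of $\mathbb R$, so the orbit is monotone regardless of where (or whether) fixed points exist, and the monotone convergence theorem plus continuity finishes the dichotomy in one stroke. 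This is shorter and more unified --- it avoids both the case split and the auxiliary lemma, and the inequality you use, $g(x)-g(y)\ge\bigl(1-t(1+L)\bigr)(x-y)$, is really the same estimate that drives the paper's inequality $(1-t)x+th(x)\le c$ in Lemma \ref{existextension1}, just applied to a general pair $(x,y)$ rather than to $(x,c)$. What the paper's approach buys in exchange is a sharper identification of the limit: it shows the sequence converges to the \emph{nearest} fixed point beyond $x_0$, whereas your argument only exhibits the limit as \emph{some} fixed point. One point you handled well and should keep explicit: the theorem as stated omits the restriction $0<t\le\frac{1}{1+L}$, and without it the monotonicity of $g$ (and indeed the monotonicity claim of the theorem) fails, so your flagging of that hypothesis is necessary, not optional.
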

\begin{proof}Assume $x_0=e\in [a,b]$. If $h(x_0)=x_0$, then $\{x_n\}$ is a constant sequence and converges to the fixed point $e=x_0$. If $h(x_0)>x_0$, then we have the following two different cases:
\begin{itemize}
\item (a) There exists a fixed point $d$ such that $x_0<d$. In this case, suppose that $c$ be the smallest fixed point of $h$ such that $x_0<c$. Then, $h:[x_0,c]\to \mathbb R$ satisfies all the conditions of \ref{existextension1} and therefore $\{x_n\}$ converges to $c$.
\item (b) There is no fixed point $d$ such that $x_0<d$. Suppose $f(y)<y$ for some $y>x_0$. Then, using intermediate value theorem for the continuous function $h(x)-x$ on the interval $[x_0,y]$ follows that there exists a point $z$ with $x_0<z<y$ such that $h(z)=z$. This is a contradiction, therefore, we deduce that $h(x)>x$ each $x>x_0$. We have $x_1=(1-t) x_0+t h(x_0)>(1-t) x_0+t x_0=x_0$. By induction, we deduce that $x_{n+1}=(1-t) x_n+t h(x_n)>x_n$ for each $n\ge 0$. Therefore, $\{x_n\}$ is an increasing sequence. If $\{x_n\}$ be a bounded sequence, then it converges to some point $c>x_0$. Since $h$ is a continuous map, taking limit of both sides of Equation \ref{mea} follows that $h(c)=c$ that is a contradiction. Therefore, $\{x_n\}$ is not bounded and diverges to infinity.
\end{itemize}
Therefore, the proof is complete when $f(x_0)>x_0$. A similar proof holds when $f(x_0)<x_0$. The only difference is that the sequence $\{x_n\}$ is decreasing when $f(x_0)<x_0$.
\end{proof}
We extend Theorem \ref{krass} as follows to non-self mapping. The only difference is that the sequence $\{x_n\}$ may exit the interval $[a,b]$ instead of diverging to infinity.
\begin{corollary}
Let $L,a,b>0,a<b$, $h: [a,b]\to \mathbb R$ be a $L$-Lipschitz mapping. Let $e\in\mathbb R$, define $x_0=e$ and
\begin{equation}
x_{n+1}=(1-t) x_n+t h(x_n), n\ge 1
\end{equation}
Then, $\{x_n\}$ is a monotone sequence that either converges to a fixed point of $h$ or exits the interval $[a,b]$.
\end{corollary}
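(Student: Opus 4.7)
The plan is to imitate the case analysis from the proof of Theorem \ref{krass}, but with the interval $[a,b]$ in place of the whole line; the role previously played by divergence to infinity will be replaced by the sequence exiting $[a,b]$. Implicitly we assume $e \in [a,b]$ so that $h(e)$ is defined, and we read the statement as saying that either $\{x_n\}$ stays in $[a,b]$ forever and converges to a fixed point of $h$, or some iterate $x_N$ lies outside $[a,b]$ (after which the iteration is not defined).

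First I would dispose of the trivial case $h(x_0)=x_0$, which gives a constant sequence. Next, I would handle $h(x_0) > x_0$ by splitting into two sub-cases, exactly as in Theorem \ref{krass}. In sub-case (a), where $h$ has a fixed point in $(x_0,b]$, let $c$ be the smallest such fixed point; since $h$ restricted to $[x_0,c]$ is $L$-Lipschitz, has $c$ as its unique fixed point there, and satisfies $h(x)>x$ on $[x_0,c)$ (by the intermediate value argument applied to $h(x)-x$), Lemma \ref{existextension1} gives monotone convergence of $\{x_n\}$ to $c$. In sub-case (b), where $h$ has no fixed point in $(x_0,b]$, the same intermediate value argument forces $h(x)>x$ throughout $(x_0,b]$, so as long as every iterate stays in $[a,b]$ the recursion produces a strictly increasing sequence. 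If all $x_n$ remain in $[a,b]$, monotonicity and boundedness would give a limit $c^\star \in [x_0,b]$, and continuity of $h$ on $[a,b]$ together with the recursion would force $h(c^\star) = c^\star$, contradicting the absence of a fixed point to the right of $x_0$. Hence some iterate must leave $[a,b]$.

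The case $h(x_0) < x_0$ is symmetric: either Lemma \ref{existextension2} delivers monotone decreasing convergence to the largest fixed point of $h$ in $[a,x_0)$, or no such fixed point exists and the decreasing sequence must exit $[a,b]$ through the left endpoint.

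The main obstacle, as in Theorem \ref{krass}, is in sub-case (b): one must rule out the possibility that $\{x_n\}$ stays inside $[a,b]$ without accumulating at a fixed point. The resolution is the same bounded-monotone-plus-continuity argument used there; the only adjustment is that the contradiction comes from the absence of a fixed point in $(x_0,b]$ (resp.\ $[a,x_0)$) rather than from unboundedness. No new estimate is needed, and the monotonicity conclusion follows uniformly across all cases.
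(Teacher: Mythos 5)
Your proposal is correct and follows essentially the same route as the paper, which simply refers back to the case analysis of Theorem \ref{krass} and notes that a monotone sequence with no fixed point ahead of it must leave $[a,b]$; your write-up merely makes explicit the details (the IVT argument, the bounded-monotone contradiction, and the termination of the iteration once an iterate leaves the domain) that the paper leaves implicit.
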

\begin{proof}
The proof is the same as with Theorem \ref{krass}. Since the sequence $\{x_n\}$ is monotone, it exits the interval if there is no fixed point in front of it.
\end{proof}
Now, we deduce Theorem \ref{hillam} as follows:
\begin{theorem}[Hillam's Theorem]
Let $L>0$, then for every $L$-Lipschitz mapping mapping $h:[a,b]\to [a,b]$ the sequence of iterations defined by Equation \ref{krsequence} converges monotonically to a fixed point of $h$ if $0<t\le \frac{1}{1+L}$.
\end{theorem}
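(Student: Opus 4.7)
The plan is to reduce Hillam's theorem directly to Theorem \ref{krass} by extending the self-mapping $h$ to a Lipschitz mapping on all of $\mathbb R$. Define $\tilde h:\mathbb R\to\mathbb R$ by $\tilde h(x)=h(a)$ for $x<a$, $\tilde h(x)=h(x)$ for $x\in[a,b]$, and $\tilde h(x)=h(b)$ for $x>b$. A short case analysis on whether the two arguments lie inside or outside $[a,b]$ shows that $\tilde h$ is again $L$-Lipschitz (the only mildly delicate case is when one argument is in $[a,b]$ and the other is outside, where the extremum of $h$ at $a$ or $b$ together with $L$-Lipschitzness on $[a,b]$ gives the bound).

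Next, I would observe that whenever $x_n\in[a,b]$ we have $\tilde h(x_n)=h(x_n)\in[a,b]$, so $x_{n+1}=(1-t)x_n+t\tilde h(x_n)$ is a convex combination of two points of $[a,b]$ and therefore belongs to $[a,b]$. By induction, the Krasnoselskii iterates starting from any $x_0\in[a,b]$ remain in $[a,b]$ and coincide for $h$ and $\tilde h$.

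Now I would apply Theorem \ref{krass} to $\tilde h$ with starting point $x_0\in[a,b]$ and step size $0<t\le\tfrac{1}{1+L}$. That theorem asserts that $\{x_n\}$ is monotone and either converges to a fixed point of $\tilde h$ or diverges to infinity. Since the sequence is confined to the bounded set $[a,b]$, divergence to infinity is impossible, so it converges to some $c\in[a,b]$ with $\tilde h(c)=c$. Because $\tilde h$ agrees with $h$ on $[a,b]$, the point $c$ is a fixed point of $h$, and the monotonicity is inherited.

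The main obstacle, and really the only nontrivial verification, is checking that the constant extension $\tilde h$ preserves the Lipschitz constant $L$; everything else is immediate from Theorem \ref{krass} and the self-mapping property, which prevents the iterates from escaping $[a,b]$. No new estimates beyond those already established are required.
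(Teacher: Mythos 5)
Your proof is correct, but it takes a genuinely different route from the paper. The paper proves Hillam's theorem directly from Lemma \ref{existextension1} (resp.\ Lemma \ref{existextension2}): if $h(x_0)>x_0$, the self-mapping property $h(b)\le b$ and the intermediate value theorem applied to $h(x)-x$ on $[x_0,b]$ produce a fixed point above $x_0$; taking $c$ to be the smallest such fixed point, the restriction $h:[x_0,c]\to\mathbb R$ satisfies the hypotheses of Lemma \ref{existextension1}, which gives monotone convergence to $c$. You instead extend $h$ to an $L$-Lipschitz map $\tilde h$ on all of $\mathbb R$ by constant continuation, note that the iterates of $h$ and $\tilde h$ coincide and stay in $[a,b]$ by convexity and the self-mapping property, and then invoke Theorem \ref{krass}, ruling out the divergence alternative by boundedness. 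Both arguments are sound (your extension is indeed $L$-Lipschitz: the only case needing the triangle-type estimate $|h(a)-h(y)|\le L(y-a)\le L(y-x)$ for $x<a\le y$ works out, as does the two-sided case via $|h(a)-h(b)|\le L(b-a)\le L|x-y|$). The trade-off: the paper's route identifies the limit explicitly as the nearest fixed point on the relevant side of $x_0$ and keeps the argument at the level of the two lemmas, while your reduction is shorter to state but pushes all the real work into Theorem \ref{krass} (whose proof in the paper is itself built on those same lemmas), and requires the extra, if routine, verification that constant extension preserves the Lipschitz constant. One small caution: the statement of Theorem \ref{krass} as printed omits the hypothesis $0<t\le\frac{1}{1+L}$ even though its proof uses it; you apply it with $t$ in that range, so your argument is unaffected, but you are implicitly relying on the corrected form of that theorem.
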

\begin{proof}Let $x_0\in [a,b]$ and define the sequence $\{x_n\}$ as Equation \ref{meanex}. First, suppose $h(x_0)>x_0$. Since $h(x)-x$ is a continuous function on $[x_0,b]$ and $h(b)\le b$, there exists a fixed point $c$ of $h$ such that $x_0\le c\le b$. Suppose $c$ be the smallest fixed point of $h$ such that $c>x_0$, $c$ is the unique fixed point of $h$ in $[a,c]$. Then, $h:[x_0,c]\to\mathbb R$ satisfies all conditions of Lemma \ref{existextension1}, so $\{x_n\}$ converges to $c$ that completes the proof. If $h(x_0)<x_0$, using Lemma \ref{existextension2}, the proof follows similarly.
\end{proof}
\begin{remark}
If $h:[a,b]\to \mathbb R$ be an L-Lipschitz function, then $\frac{h(x)-h(y)}{x-y}\ge-L$ for each $x,y\in [a,b]$. All our presented results hold if we just assume $\frac{h(x)-h(y)}{x-y}\ge-L$ for each $x,y\in [a,b]$ that is weaker than the L-Lipschitz condition.
\end{remark}
\section{A new proof of global convergence condition for the Newton-Raphson method}\label{sec3}
In this section, we show that the Newton-Raphson sequence is a special case of Krasnoselskii's sequence, and we provide new proof for its global convergence as follows. It is worth mentioning that these results have been presented already as a pre-print paper\cite{hassan}.
\begin{theorem}\label{convergence1}
Suppose that $a,c\in \mathbb R$ with $a<c$, $h:[a,c]\to \mathbb R$ is a real-valued function, $c$ is the unique root of $h$ in $[a,c]$. Also assume that $h^{''}(x),h^{'}(x)$ exist for each $x\in (a,c)$, $h(x)h^{''}(x)\ge 0$ for each $x\in (a,c)$, $h(x)h^{'}(x)< 0$ for each $x\in [a,c)$, $h^{'}(x)\not =0$ for each $x\in (a,c)$, $h^{'}(a+)\not =0$, $h^{'}(c-)\not =0$. For each $x_0\in [a,c]$ define:
\begin{equation}\label{newtonseq}
x_{n+1}=x_n-\frac{h(x_n)}{h^{'}(x_n)}\text { for all } n\ge 0.
\end{equation}
Then, $\{x_n\}$ converges to $c$ as $n\to \infty$.
\end{theorem}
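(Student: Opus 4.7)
The plan is to recast the Newton-Raphson recursion as the Krasnoselskii averaging iteration of Theorem \ref{th1}, and then to verify that the associated map satisfies the hypotheses of that theorem.

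First I would define the auxiliary map $g : [a,c] \to \mathbb{R}$ by
\[
g(x) = x - \frac{2 h(x)}{h'(x)},
\]
with the value at $c$ taken as the continuous extension (which gives $g(c) = c$, since $h(c) = 0$ and $h'(c-) \neq 0$). With this definition,
\[
\frac{x_n + g(x_n)}{2} = x_n - \frac{h(x_n)}{h'(x_n)} = x_{n+1},
\]
so the Newton iterates in \eqref{newtonseq} are precisely the averaging sequence from Theorem \ref{th1} applied to $g$.

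Next I would verify the three structural hypotheses of Theorem \ref{th1} for $g$ on $[a,c]$. For the derivative bound, a direct differentiation gives
\[
g'(x) = -1 + \frac{2 h(x) h''(x)}{(h'(x))^2},
\]
and the assumption $h(x) h''(x) \geq 0$ makes the second summand nonnegative, so $g'(x) \geq -1$ on $(a,c)$. For the strict inequality $g(x) > x$ on $[a,c)$, note that $g(x) - x = -2 h(x)/h'(x)$, which is strictly positive on $[a,c)$ precisely because $h(x) h'(x) < 0$ there. For uniqueness of the fixed point, observe that $g(x) = x$ is equivalent to $h(x) = 0$ (as $h'$ does not vanish on $(a,c)$), and $c$ is the unique root of $h$ in $[a,c]$ by hypothesis.

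The main obstacle is ensuring that $g$ is well-defined and continuous on the closed interval $[a,c]$. On $(a,c)$ continuity is automatic since $h'$ is nonzero there, but at the endpoints one must handle the boundary behavior carefully: at $c$, the ratio $h(c)/h'(c-)$ is $0/\text{nonzero}$, which extends continuously to $g(c) = c$; at $a$, the value $g(a)$ is finite since $h'(a+) \neq 0$. These are exactly the roles played by the endpoint hypotheses $h'(a+) \neq 0$ and $h'(c-) \neq 0$. Once this continuity is settled, invoking Theorem \ref{th1} with starting point $x_0 \in [a,c)$ yields $x_n \to c$, and the boundary case $x_0 = c$ is trivial, since $h(c) = 0$ forces $x_n = c$ for all $n$.
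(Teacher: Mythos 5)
Your proposal is correct and follows essentially the same route as the paper: define $g(x)=x-\tfrac{2h(x)}{h'(x)}$, observe that $x_{n+1}=\tfrac{x_n+g(x_n)}{2}$, check $g'\ge -1$ and $g(x)>x$ on $[a,c)$ from the sign hypotheses, and invoke Theorem \ref{th1}. You are somewhat more careful than the paper about the continuity of $g$ at the endpoints and the role of $h'(a+)\ne 0$, $h'(c-)\ne 0$, but the argument is the same.
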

\begin{proof}
For each $x\in [a,c]$, define $g(x)=x-\frac{2h(x)}{h^{'}(x)}$. By our assumptions $h(x)>x$ for each $x\in[a,c)$ and
\begin{equation}\label{newtonseq}
g^{'}(x)=-1+2\frac{h^{''}(x)h(x)}{(h^{'}(x))^2}\ge -1 \text{ for each } x\in(a,c).
\end{equation}
We have $\frac{g(x_n)+x_n}{2}=x_n-\frac{h(x_n)}{h^{'}(x_n)}=x_{n+1}$. Now, $g$ satisfies all conditions of Lemma $\ref{th1}$, so $\{x_n\}$ converges to $c$ as $n\to \infty$ where $c$ is the fixed point of $h$. We have $c=c-\frac{h(c)}{h^{'}(c-)}$ which follows that $h(c)=0.$
\end{proof}
\begin{theorem}\label{convergence2}
Suppose that $c,b\in \mathbb R$ with $c<b$, $h:[c,b]\to \mathbb R$ is a real-valued function, $c$ is the unique root of $h$ in $[c,b]$. Also assume that $h^{''}(x),h^{'}(x)$ exist for each $x\in (c,b)$, $h(x)h^{''}(x)\ge 0$ for each $x\in (c,b)$, $h(x)h^{'}(x)>0$ for each $x\in (c,b]$, $h^{'}(x)\not =0$ for each $x\in (c,b)$, $h^{'}(c+)\not =0$ and $h^{'}(b-)\not =0$. For each $x_0\in [c,b]$ define:
\begin{equation}\label{newtonseq}
x_{n+1}=x_n-\frac{h(x_n)}{h^{'}(x_n)}\text { for all } n\ge 0.
\end{equation}
Then, $\{x_n\}$ converges to $c$ as $n\to \infty$.
\end{theorem}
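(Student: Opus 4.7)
The plan is to reduce the Newton--Raphson iteration to the midpoint Krasnoselskii-type iteration covered by Theorem \ref{th2}, mirroring the argument given for Theorem \ref{convergence1} on the interval $[a,c]$. First I would define
$$g(x) = x - \frac{2h(x)}{h'(x)} \quad \text{for } x \in [c,b],$$
setting $g(c)=c$ (which is forced because $h(c)=0$ and $h'(c+)\neq 0$). The hypotheses $h'(x)\neq 0$ on $(c,b)$ together with $h'(c+)\neq 0$ and $h'(b-)\neq 0$ ensure $g$ is well-defined and continuous on $[c,b]$ and differentiable on $(c,b)$. The crucial identity is that the midpoint iteration for $g$ agrees with the Newton iteration for $h$:
$$\frac{x_n+g(x_n)}{2} = x_n - \frac{h(x_n)}{h'(x_n)} = x_{n+1}.$$

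Next I would verify the three hypotheses of Theorem \ref{th2} for $g$ on $[c,b]$. The sign condition $h(x)h'(x)>0$ on $(c,b]$ gives $h(x)/h'(x)>0$ there, so $g(x)<x$ for every $x\in(c,b]$; this simultaneously forces $c$ to be the unique fixed point of $g$ in $[c,b]$. A direct differentiation yields
$$g'(x) = -1 + 2\,\frac{h(x)h''(x)}{\bigl(h'(x)\bigr)^2},$$
which is $\ge -1$ because $h(x)h''(x)\ge 0$ by hypothesis. This is exactly the lower bound on the derivative required by Theorem \ref{th2}.

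With these properties established, Theorem \ref{th2} applied to $g$ on $[c,b]$ with any starting point $x_0\in[c,b]$ (the case $x_0=c$ being trivially constant) shows that $\{x_n\}$ converges to $c$. Passing to the limit in the recursion and using $h'(c+)\neq 0$ then confirms $h(c)=0$, exactly as in the closing step of the proof of Theorem \ref{convergence1}.

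The only point requiring real care is the behavior of $g$ at the endpoints: the nondegeneracy assumptions $h'(c+)\neq 0$ and $h'(b-)\neq 0$ are precisely what keeps the quotient $h/h'$ bounded and allows $g$ to extend as a continuous function to the closed interval, so that Theorem \ref{th2} can be invoked. Beyond that, the argument is a routine mirror image of the proof of Theorem \ref{convergence1}, with the roles of ``$g(x)>x$, increasing sequence'' replaced by ``$g(x)<x$, decreasing sequence.''
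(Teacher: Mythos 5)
Your proposal is correct and follows essentially the same route as the paper's own proof: defining $g(x)=x-\tfrac{2h(x)}{h'(x)}$, identifying the Newton step with the midpoint iteration $\tfrac{x_n+g(x_n)}{2}$, checking $g'(x)\ge -1$ and $g(x)<x$ on $(c,b]$, and invoking Theorem \ref{th2}. You even supply slightly more detail than the paper (the endpoint continuity of $g$ and the trivial case $x_0=c$), so no changes are needed.
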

\begin{proof}
For each $x\in [c,b]$, define $g(x)=x-\frac{2h(x)}{h^{'}(x)}$. By our assumptions $h(x)<x$ for each $x\in(c,b]$ and
\begin{equation}\label{newtonseq}
g^{'}(x)=-1+2\frac{h^{''}(x)h(x)}{(h^{'}(x))^2}\ge -1 \text{ for each } x\in(c,b).
\end{equation}
We have $\frac{g(x_n)+x_n}{2}=x_n-\frac{h(x_n)}{h^{'}(x_n)}=x_{n+1}$. Now, $g$ satisfies all conditions of Lemma $\ref{th2}$, so $\{x_n\}$ converges to $c$ as $n\to \infty$ where $c$ is the fixed point of $h$. We have $c=c-\frac{h(c)}{h^{'}(c+)}$ which follows that $h(c)=0.$
\end{proof}
\section*{Conclusion}
In this paper, we introduced a new method for finding the fixed points of Lipschitz mappings. Using this method, we extended Krasnoselskii's fixed point theorem to non-self mappings. It is worth mentioning that generalizing a result to non-self mapping is more useful in practice and can be applied to more functions. We provided a new simple proof of Hillam's result. Our results are valuable for two reasons. First, using them, we can obtain an algorithm to estimate the fixed point or real root of a Lipschitz map. On the other hand, using this method, it is possible to give  new proofs of Krasnoselski's result for  self or non-self maps on $\mathbb R^n$ and to present new fixed point results.
We also investigated the convergence of the Newton-Raphson method and presented the global convergence condition for this method. This global convergence condition is not a new result, but we have obtained it more easily. Moreover, we can apply our method to more iterative sequences and provide convergence conditions for these iterations.
\section*{Statements and Declarations}
\begin{itemize}
\item (Author contribution) All the results of this manuscript are provided by  Khandani. H, the first and only author. The results presented in Section \ref{sec3} are from a  pre-print paper and can be accessed at  https://arxiv.org/abs/2112.04898v1. We have mentioned  this paper in the list of references \cite{hassan}.
\item (Funding)The authors did not receive support from any organization for the submitted work.
\item (Competing Interests)The authors have no competing interests to declare that are relevant to the content of this article.
\item (Data availability) Not applicable.
\end{itemize}
\bibliography{sn-bibliography}


\begin{thebibliography}{13}
\ifx \bisbn   \undefined \def \bisbn  #1{ISBN #1}\fi
\ifx \binits  \undefined \def \binits#1{#1}\fi
\ifx \bauthor  \undefined \def \bauthor#1{#1}\fi
\ifx \batitle  \undefined \def \batitle#1{#1}\fi
\ifx \bjtitle  \undefined \def \bjtitle#1{#1}\fi
\ifx \bvolume  \undefined \def \bvolume#1{\textbf{#1}}\fi
\ifx \byear  \undefined \def \byear#1{#1}\fi
\ifx \bissue  \undefined \def \bissue#1{#1}\fi
\ifx \bfpage  \undefined \def \bfpage#1{#1}\fi
\ifx \blpage  \undefined \def \blpage #1{#1}\fi
\ifx \burl  \undefined \def \burl#1{\textsf{#1}}\fi
\ifx \doiurl  \undefined \def \doiurl#1{\url{https://doi.org/#1}}\fi
\ifx \betal  \undefined \def \betal{\textit{et al.}}\fi
\ifx \binstitute  \undefined \def \binstitute#1{#1}\fi
\ifx \binstitutionaled  \undefined \def \binstitutionaled#1{#1}\fi
\ifx \bctitle  \undefined \def \bctitle#1{#1}\fi
\ifx \beditor  \undefined \def \beditor#1{#1}\fi
\ifx \bpublisher  \undefined \def \bpublisher#1{#1}\fi
\ifx \bbtitle  \undefined \def \bbtitle#1{#1}\fi
\ifx \bedition  \undefined \def \bedition#1{#1}\fi
\ifx \bseriesno  \undefined \def \bseriesno#1{#1}\fi
\ifx \blocation  \undefined \def \blocation#1{#1}\fi
\ifx \bsertitle  \undefined \def \bsertitle#1{#1}\fi
\ifx \bsnm \undefined \def \bsnm#1{#1}\fi
\ifx \bsuffix \undefined \def \bsuffix#1{#1}\fi
\ifx \bparticle \undefined \def \bparticle#1{#1}\fi
\ifx \barticle \undefined \def \barticle#1{#1}\fi
\bibcommenthead
\ifx \bconfdate \undefined \def \bconfdate #1{#1}\fi
\ifx \botherref \undefined \def \botherref #1{#1}\fi
\ifx \url \undefined \def \url#1{\textsf{#1}}\fi
\ifx \bchapter \undefined \def \bchapter#1{#1}\fi
\ifx \bbook \undefined \def \bbook#1{#1}\fi
\ifx \bcomment \undefined \def \bcomment#1{#1}\fi
\ifx \oauthor \undefined \def \oauthor#1{#1}\fi
\ifx \citeauthoryear \undefined \def \citeauthoryear#1{#1}\fi
\ifx \endbibitem  \undefined \def \endbibitem {}\fi
\ifx \bconflocation  \undefined \def \bconflocation#1{#1}\fi
\ifx \arxivurl  \undefined \def \arxivurl#1{\textsf{#1}}\fi
\csname PreBibitemsHook\endcsname

\bibitem{berinde2007iterative}
\begin{bbook}
\bauthor{\bsnm{Berinde}, \binits{V.}},
\bauthor{\bsnm{Takens}, \binits{F.}}:
\bbtitle{Iterative Approximation of Fixed Points}
vol. \bseriesno{1912}.
\bpublisher{Springer},
\blocation{Berlin}
(\byear{2007})
\end{bbook}
\endbibitem

\bibitem{krssnoselskiitwo}
\begin{barticle}
\bauthor{\bsnm{Krasnoselskii}, \binits{M.}}:
\batitle{Two observations about the method of successive approximations uspehi
  math}.
\bjtitle{Applied mathematics and computation}
\bvolume{10}(\bissue{1}),
\bfpage{123}--\blpage{127}
(\byear{1955})
\end{barticle}
\endbibitem

\bibitem{edelstein1966remark}
\begin{barticle}
\bauthor{\bsnm{Edelstein}, \binits{M.}}:
\batitle{A remark on a theorem of ma krasnoselski}.
\bjtitle{Amer. Math. Monthly}
\bvolume{73},
\bfpage{509}--\blpage{510}
(\byear{1966})
\end{barticle}
\endbibitem

\bibitem{edelstein1978nonexpansive}
\begin{barticle}
\bauthor{\bsnm{Edelstein}, \binits{M.}},
\bauthor{\bsnm{O'Brien}, \binits{R.C.}}:
\batitle{Nonexpansive mappings, asymptotic regularity and successive
  approximations}.
\bjtitle{Journal of the London Mathematical Society}
\bvolume{2}(\bissue{3}),
\bfpage{547}--\blpage{554}
(\byear{1978})
\end{barticle}
\endbibitem

\bibitem{kirk2000nonexpansive}
\begin{barticle}
\bauthor{\bsnm{Kirk}, \binits{W.}}:
\batitle{Nonexpansive mappings and asymptotic regularity}.
\bjtitle{Nonlinear Analysis: Theory, Methods \& Applications}
\bvolume{40}(\bissue{1-8}),
\bfpage{323}--\blpage{332}
(\byear{2000})
\end{barticle}
\endbibitem

\bibitem{subrahmanyam2018elementary}
\begin{bbook}
\bauthor{\bsnm{Subrahmanyam}, \binits{P.V.}}:
\bbtitle{Elementary Fixed Point Theorems}.
\bpublisher{Springer},
\blocation{Ebook}
(\byear{2018})
\end{bbook}
\endbibitem

\bibitem{bailey1974krasnoselski}
\begin{barticle}
\bauthor{\bsnm{Bailey}, \binits{D.}}:
\batitle{Krasnoselski's theorem on the real line}.
\bjtitle{The American Mathematical Monthly}
\bvolume{81}(\bissue{5}),
\bfpage{506}--\blpage{507}
(\byear{1974})
\end{barticle}
\endbibitem

\bibitem{borwein1991fixed}
\begin{barticle}
\bauthor{\bsnm{Borwein}, \binits{D.}},
\bauthor{\bsnm{Borwein}, \binits{J.}}:
\batitle{Fixed point iterations for real functions}.
\bjtitle{Journal of Mathematical Analysis and Applications}
\bvolume{157}(\bissue{1}),
\bfpage{112}--\blpage{126}
(\byear{1991})
\end{barticle}
\endbibitem

\bibitem{hillam1975generalization}
\begin{barticle}
\bauthor{\bsnm{Hillam}, \binits{B.P.}}:
\batitle{A generalization of krasnoselski's theorem on the real line}.
\bjtitle{Mathematics Magazine}
\bvolume{48}(\bissue{3}),
\bfpage{167}--\blpage{168}
(\byear{1975})
\end{barticle}
\endbibitem

\bibitem{morris1983computational}
\begin{botherref}
\oauthor{\bsnm{Morris}, \binits{J.L.}}:
Computational methods in elementary numerical analysis.
JOHN WILEY \& SONS, INC., 605 THIRD AVE., NEW YORK, NY 10158, USA, 1983, 416
(1983)
\end{botherref}
\endbibitem

\bibitem{thorlund2004global}
\begin{barticle}
\bauthor{\bsnm{Thorlund-Petersen}, \binits{L.}}:
\batitle{Global convergence of newton’s method on an interval}.
\bjtitle{Mathematical Methods of Operations Research}
\bvolume{59}(\bissue{1}),
\bfpage{91}--\blpage{110}
(\byear{2004})
\end{barticle}
\endbibitem

\bibitem{khandani2021iterative}
\begin{botherref}
\oauthor{\bsnm{Khandani}, \binits{H.}},
\oauthor{\bsnm{Khojasteh}, \binits{F.}}:
A new method for estimating the real roots of real differentiable functions.
arXiv preprint arXiv:2111.14460
(2021)
\end{botherref}
\endbibitem

\bibitem{hassan}
\begin{botherref}
\oauthor{\bsnm{Khandani}, \binits{H.}}:
A convergence condition for Newton-Raphson method.
Preprint at \url{https://arxiv.org/abs/2112.04898v1}
(2021)
\end{botherref}
\endbibitem

\end{thebibliography}

\end{document}